\theoremstyle{definition}
\newtheorem{theorem}{Theorem}
\newtheorem{lemma}{Lemma}
\theoremstyle{remark}
\newtheorem{remark}{Remark}
\begin{document}

\title{Modelling some real phenomena by fractional differential equations}

\author{Ricardo Almeida$^1$, Nuno R. O. Bastos$^{1,2}$ and M. Teresa T. Monteiro$^3$}

\date{$^1$Center for Research and Development in Mathematics and Applications (CIDMA)\\
Department of Mathematics, University of Aveiro, 3810--193 Aveiro, Portugal\\
$^2$Centre for the Study of Education, Technologies and Health (CI$\&$DETS)\\
Department of Mathematics, School of Technology and Management of Viseu\\Polytechnic Institute of Viseu, 3504--510 Viseu, Portugal\\
$^3$Algoritmi R$\&$D Center, Department of Production and Systems\\
University of Minho, Campus de Gualtar, 4710–--057 Braga, Portugal}

\maketitle

\begin{abstract}

This paper deals with fractional differential equations, with dependence on  a Caputo fractional derivative of real order. The goal is to show, based on concrete examples and experimental data from several experiments, that fractional differential equations may model more efficiently certain problems than ordinary differential equations. A numerical optimization approach based on least squares approximation is used to determine the order of the fractional operator that better describes real data, as well as other related parameters.
\end{abstract}

\textbf{keywords:} {Fractional calculus, fractional differential equation, numerical optimization}

\section{Introduction}

\subsection{Fractional calculus}

We start with a review on fractional calculus, as presented in e.g. \cite{Kilbas,Podlubny,Samko}. Fractional calculus is an extension of ordinary calculus, in a way that derivatives and integrals are defined for arbitrary real order. In some phenomena, fractional operators allow to model better than ordinary derivatives and ordinary integrals, and can represent more efficiently systems with high-order dynamics and complex nonlinear phenomena. This is due to two main reasons; first, we have the freedom to choose any order for the derivative and integral operators, and not be restricted to integer-order only. Secondly, fractional order derivatives depend not only on local conditions but also on the past, useful when the system has a long-term memory.

In a famous letter dated 1695, L'Hopital's asked Leibniz  what would be the derivative of order $\alpha=1/2$, and the response of Leibniz ``An apparent paradox, from which one day useful consequences will be drawn" became the birth of fractional calculus. Along the centuries, several attempts were made to define fractional operators. For example, in 1730, using the formula
$$\frac{d^n x^m}{dx^n}= m(m-1)\ldots (m-n+1)x^{m-n}=\frac{\Gamma(m+1)}{\Gamma(m-n+1)}x^{m-n},$$
Euler obtained the following expression
$$\frac{d^{1/2} x}{dx^{1/2}}=\sqrt{\frac{4x}{\pi}}.$$
Here, $\Gamma$ denotes the Gamma function,
$$\Gamma(t)=\int_0^\infty s^{t-1}\mbox{e}^{-s}\,ds, \quad t\in\mathbb R \setminus \mathbb Z^-_0,$$
which is  an extension of the factorial function to real numbers. Two of the basic properties of the Gamma function are
$$\Gamma(m)=(m-1)!, \quad \mbox{for all }m\in\mathbb N,$$
and
$$\Gamma(t+1)=t \, \Gamma(t),\quad \mbox{for all } t\in\mathbb R \setminus \mathbb Z^-_0.$$
However, the most famous and important definition is due to Riemann. Starting with  Cauchy's formula
$$\int_a^t ds_1\,\int_a^{s_1}ds_2\,\ldots \int_a^{s_{n-1}}y(s_n) \, ds_n=\frac{1}{(n-1)!}\int_a^t (t-s)^{n-1}y(s) ds,$$
Riemann defined a fractional integral type of order $\alpha>0$ of a function $y:[a,b]\to\mathbb R$ as
$${_aI_t^\alpha}y(t)=\frac{1}{\Gamma(\alpha)}\int_a^t (t-s)^{\alpha-1}y(s) ds.$$
Fractional derivatives are defined using the fractional integral idea. The Riemann--Liouville fractional derivative of order $\alpha>0$ is given by
$${_aD_t^\alpha}y(t)=\left(\frac{d}{dt}\right)^n{_aI_t^{n-\alpha}}y(t)=\frac{1}{\Gamma(n-\alpha)}\left(\frac{d}{dt}\right)^n\int_a^t (t-s)^{n-\alpha-1}y(s) ds,$$
where $n\in\mathbb N$ is such that $\alpha\in(n-1,n)$. These definitions are probably the most important with respect to  fractional operators, and until the $20th$ century the subject was relevant in pure mathematics only. Nowadays, this is an important field not only in mathematics but also in other sciences, engineering, economics, etc. In fact, using these more general concepts, we can describe better certain real world phenomena which are not possible using integer-order derivatives. For example, we can find applications of fractional calculus in mechanics \cite{Atanackovic}, engineering \cite{Machado}, viscoelasticity \cite{Meral},  dynamical systems \cite{Tarasov}, etc.
Although the Riemann--Liouville fractional derivative is of great importance, from historial reasons, it may not be suitable to model some real world phenomena. So, different operators are considered in the literature, for example, the Caputo fractional derivative. It has proven its applicability due to two reasons: the fractional derivative of a constant is zero and the initial value problems depend on integer-order derivatives only.
The Caputo fractional derivative of a function $y:[a,b]\to\mathbb R$ of order $\alpha>0$ is defined by
$${_a^CD_t^{\alpha}}y(t)={_aI_t^{n-\alpha}}y^{(n)}(t)=\frac{1}{\Gamma(n-\alpha)}\int_a^t (t-s)^{n-\alpha-1}y^{(n)}(s) ds,$$
where $n=[\alpha]+1$. In particular, when $\alpha\in(0,1)$, we obtain
$${_a^CD_t^{\alpha}}y(t)=\frac{1}{\Gamma(1-\alpha)}\int_a^t (t-s)^{-\alpha}y'(s)ds.$$
The fractional integral and fractional derivative are inverse operations, in the sense that (see \cite{Kilbas}):

\begin{lemma}\label{2.22} Let $\alpha>0$ and $n\in\mathbb N$ be such that $\alpha\in(n-1,n)$. If $y\in AC^n[a,b]$, or $y\in C^n[a,b]$, then
$${_aI_t^\alpha} \, {_a^CD_t^{\alpha}}y(t)=y(t)-\sum_{k=0}^{n-1}\frac{y^{(k)}(a)}{k!}(t-a)^k.$$
\end{lemma}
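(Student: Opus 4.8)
The plan is to establish the identity
$$
{_aI_t^\alpha}\,{_a^CD_t^{\alpha}}y(t)=y(t)-\sum_{k=0}^{n-1}\frac{y^{(k)}(a)}{k!}(t-a)^k
$$
by composing the two integral operators explicitly and then reducing everything to the ordinary ($\alpha=n$) case via the semigroup property of fractional integrals. First I would write out both operators as iterated integrals. By definition, ${_a^CD_t^{\alpha}}y={_aI_t^{n-\alpha}}y^{(n)}$, so that
$$
{_aI_t^\alpha}\,{_a^CD_t^{\alpha}}y(t)={_aI_t^\alpha}\,{_aI_t^{n-\alpha}}\,y^{(n)}(t).
$$
The key structural fact I would invoke is the \emph{additivity} (semigroup) law for Riemann--Liouville fractional integrals, namely ${_aI_t^{\alpha}}\,{_aI_t^{\beta}}={_aI_t^{\alpha+\beta}}$ for $\alpha,\beta>0$, which itself follows from Dirichlet's interchange-of-order-of-integration argument together with the Beta-function evaluation $\int_s^t (t-\tau)^{\alpha-1}(\tau-s)^{\beta-1}\,d\tau = B(\alpha,\beta)(t-s)^{\alpha+\beta-1}$ and the identity $B(\alpha,\beta)=\Gamma(\alpha)\Gamma(\beta)/\Gamma(\alpha+\beta)$. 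Applying this with $\beta=n-\alpha$ collapses the composition to the integer-order iterated integral
$$
{_aI_t^{\alpha}}\,{_aI_t^{n-\alpha}}\,y^{(n)}(t)={_aI_t^{n}}\,y^{(n)}(t)=\frac{1}{(n-1)!}\int_a^t (t-s)^{n-1}y^{(n)}(s)\,ds.
$$

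It then remains to evaluate ${_aI_t^n}y^{(n)}(t)$, the $n$-fold ordinary antiderivative of $y^{(n)}$. Here I would either recognize this as the statement of Taylor's theorem with integral remainder, or integrate by parts $n$ times, each integration peeling off one boundary term of the form $\frac{y^{(k)}(a)}{k!}(t-a)^k$. Carrying out the induction on $n$, the integrated terms assemble into exactly $\sum_{k=0}^{n-1}\frac{y^{(k)}(a)}{k!}(t-a)^k$, while the fully reduced integral returns $y(t)$ by the fundamental theorem of calculus, yielding the claimed formula.

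The main obstacle is not the algebra but the regularity bookkeeping: the semigroup law and the Fubini interchange must be justified for the two admissible hypotheses, $y\in C^n[a,b]$ and $y\in AC^n[a,b]$. In the $C^n$ case $y^{(n)}$ is continuous and all manipulations are immediate. In the $AC^n$ case $y^{(n)}$ is merely integrable (it exists a.e. and $y^{(n-1)}$ is absolutely continuous), so I must ensure the iterated integrals are absolutely convergent before applying Fubini--Tonelli, and that the integration-by-parts step is valid for absolutely continuous functions; both hold because the kernels $(t-s)^{\alpha-1}$ are integrable near the singularity for $\alpha>0$ and products of an $L^1$ function with these kernels remain integrable. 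Once these measure-theoretic points are secured, the computation proceeds uniformly in both cases.
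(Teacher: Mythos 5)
Your proof is correct: writing the Caputo derivative as ${_aI_t^{n-\alpha}}y^{(n)}$, collapsing the composition via the semigroup law ${_aI_t^\alpha}\,{_aI_t^{n-\alpha}}={_aI_t^{n}}$ (justified by Fubini--Tonelli and the Beta-function identity), and then identifying ${_aI_t^{n}}y^{(n)}$ with the Taylor expansion with integral remainder is exactly the standard argument, and your regularity bookkeeping for the $AC^n$ case addresses the only delicate point. Note that the paper itself offers no proof of this lemma---it quotes it directly from the reference of Kilbas, Srivastava and Trujillo---so there is no internal proof to diverge from; your argument is essentially the one found in that source.
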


\noindent We can obtain ordinary derivatives by computing the limit when $\alpha\to n\in\mathbb N$. In fact, we have the following property:
$$\lim_{\alpha\to n^-} {_a^CD_t^{\alpha}}y(t) =y^{(n)}(t) \quad \mbox{and} \quad \lim_{\alpha\to n^+}{_a^CD_t^{\alpha}}y(t)=y^{(n)}(t)-y^{(n)}(a).$$
For example, let $y^*(t)=t$, with $t\geq0$. Then, for $\alpha\in(0,1)$, we have
$${_0^CD_t^{\alpha}}y^*(t)=\frac{1}{\Gamma(2-\alpha)}t^{1-\alpha},$$
while for $\alpha>1$, we have ${_0^CD_t^{\alpha}}y^*(t)=0$ (see Fig. \ref{fig:t}).

\begin{figure}[h]
  \centering
%  \psfrag{alpha=1}{$\alpha=1$}
%  \includegraphics[width=0.4\linewidth]{Figt}
  \includegraphics[width=0.55\linewidth]{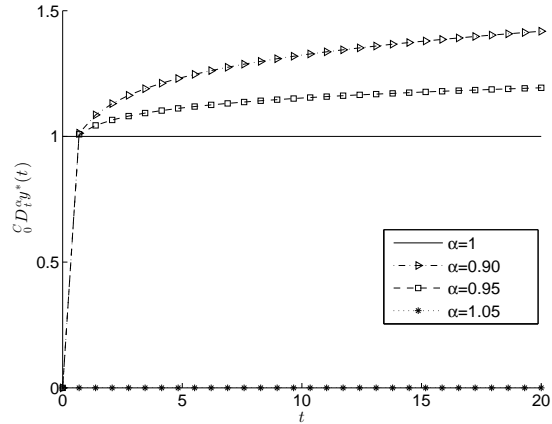}
  \caption{Derivative of $y^*$ and its fractional derivatives.}\label{fig:t}
\end{figure}

However, we have the following result.

\begin{theorem}\label{teo1} Let $f:[a,b]\times\mathbb R\to\mathbb R$ be a continuous function and $\alpha>0$ a real.
If $y^*_\alpha$ is a solution of the system
$$\left\{
\begin{array}{l}
{_a^CD_t^{\alpha}}y(t)=f(t,y), \quad \alpha\in(0,1)\\
y(a)=y_a,
\end{array}
\right.$$
and if the limit
$$\lim_{\alpha\to1^\pm}y^*_\alpha(t):= y^*(t)$$
exists for all $t\in[a,b]$, then $y^*$ is solution of the Cauchy problem
$$\left\{
\begin{array}{l}
y'(t)=f(t,y)\\
y(a)=y_a.
\end{array}
\right.$$
\end{theorem}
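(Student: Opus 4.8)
The plan is to reduce the fractional Cauchy problem to an equivalent integral (Volterra) equation, pass to the limit $\alpha\to1^-$ in that equation, and then recognise the limiting identity as the integral formulation of the ordinary Cauchy problem. Since the family $y^*_\alpha$ is defined for $\alpha\in(0,1)$, the relevant one-sided limit is $\alpha\to1^-$.

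First I would apply Lemma \ref{2.22} with $n=1$ (admissible because $\alpha\in(0,1)$) to the solution $y^*_\alpha$. Using that ${_a^CD_t^{\alpha}}y^*_\alpha(t)=f(t,y^*_\alpha(t))$ and $y^*_\alpha(a)=y_a$, applying the operator ${_aI_t^\alpha}$ to both sides of the equation converts the fractional differential equation into
$$y^*_\alpha(t)=y_a+\frac{1}{\Gamma(\alpha)}\int_a^t (t-s)^{\alpha-1}f(s,y^*_\alpha(s))\,ds.$$
Next I would let $\alpha\to1^-$. The left-hand side tends to $y^*(t)$ by hypothesis, while $\Gamma(\alpha)\to\Gamma(1)=1$; for each fixed $s<t$ the kernel satisfies $(t-s)^{\alpha-1}\to1$, and by continuity of $f$ together with $y^*_\alpha(s)\to y^*(s)$ the integrand converges pointwise to $f(s,y^*(s))$. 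Granting the interchange of limit and integral, this yields
$$y^*(t)=y_a+\int_a^t f(s,y^*(s))\,ds.$$

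From this integral representation the conclusion is immediate. The right-hand side is continuous in $t$, so $y^*$ is continuous, whence $s\mapsto f(s,y^*(s))$ is continuous; the fundamental theorem of calculus then gives that $y^*$ is differentiable with $(y^*)'(t)=f(t,y^*(t))$, and evaluating at $t=a$ gives $y^*(a)=y_a$. Thus $y^*$ solves the ordinary Cauchy problem.

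The main obstacle is justifying the passage to the limit under the integral sign, because the kernel $(t-s)^{\alpha-1}$ is singular at $s=t$ when $\alpha<1$. To control this I would restrict to $\alpha\in[\tfrac12,1)$ and construct an integrable dominating function independent of $\alpha$: on $\{s:\,t-s\le1\}$ one has $(t-s)^{\alpha-1}\le(t-s)^{-1/2}$, which is integrable on $[a,t]$, while on $\{s:\,t-s>1\}$ the kernel is bounded by $1$; since $1/\Gamma(\alpha)$ is also bounded there, the dominated convergence theorem applies provided $|f(s,y^*_\alpha(s))|$ admits a uniform bound. Securing that bound is the delicate point, as the hypothesis only asserts pointwise convergence of $y^*_\alpha$; it follows, for instance, if the convergence $y^*_\alpha\to y^*$ is uniform, since then the values $y^*_\alpha(s)$ lie in a compact set on which the continuous function $f$ is bounded.
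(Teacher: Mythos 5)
Your proof follows essentially the same route as the paper's: apply ${_aI_t^\alpha}$ together with Lemma \ref{2.22} to convert the fractional equation into the Volterra equation, then let $\alpha\to1^-$ to recover the integral form of the Cauchy problem and conclude via the fundamental theorem of calculus. You are in fact more careful than the paper, which passes the limit under the singular integral with no justification at all; your dominated-convergence argument, and your observation that the stated pointwise convergence of $y^*_\alpha$ does not by itself yield the uniform bound on $f(s,y^*_\alpha(s))$ (uniform convergence would), identify and largely fill a gap that the paper's own proof leaves open.
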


\begin{proof}
Starting with equation ${_a^CD_t^{\alpha}}y^*_\alpha(t)=f(t,y^*_\alpha)$, applying the $I^\alpha$ operator to both sides and using Lemma \ref{2.22}, we obtain that $y^*_\alpha$ satisfies the Volterra equation
$$y^*_\alpha(t)=y_a+\frac{1}{\Gamma(\alpha)}\int_a^t(t-s)^{\alpha-1}f(s,y^*_\alpha(s))\,ds.$$
Taking the limit $\alpha\to1^\pm$, we get
$$y^*(t)=y_a+\int_a^tf(s,y^*(s))\,ds,$$
that is, $y^*$ satisfies the Cauchy problem.
\end{proof}

\begin{remark} The idea of Theorem \ref{teo1} is the following. We first consider a fractional differential equation of order $\alpha\in(0,1)$, and as $\alpha\to1^-$, the fractional system converts into an ordinary Cauchy problem. Then, in order to obtain a higher accuracy for the method, we consider the solution of the fractional system $y^\star$, depending on the parameter $\alpha>0$, and compute the limit $\alpha\to1^\pm$.
\end{remark}

\subsection{Computational issues}

The MATLAB numerical experiments  to find the model parameters as well as the non-integer derivatives that fits better in the model is carried out.
The numerical experiments were done in MATLAB \cite{Matlab} using the routine \texttt{lsqcurvefit} that solves nonlinear data-fitting problems in least-squares sense, that is, given data $(y_i)_{i=1\ldots m}$, and a model $(M_i)_{i=1\ldots m}$, the error is:
$$E=\sum_{i=1}^m(y_i-M_i)^2.$$
This routine is based on an iterative method with local convergence, \emph{i.e.}, depending on the initial approximation to the parameters to estimate. The initial approximations were found, based on the data analysis of each model.  In the computational tests the trust-region-reflective algorithm was selected.
In all the studied cases, we will see that the fractional approach is more close to the data than the classical one. To test the efficiency of the model, we compare the error of the classical model $E_{classical}$ with the error given by the fractional model $E_{fractional}$:
  $$\left|\frac{E_{classical}-E_{fractional}}{E_{classical}}\right|.$$

\subsection{Outline}

This article is organized as follows. In each section we present a real problem, described by an ordinary or system of ordinary differential equations. We then consider the same problem, but modeled by a fractional or system of fractional differential equations, and compare which of the two models are more suitable to describe the process, based on real experimental data.
  Given an ordinary differential equation $y'(t)=f(t,y)$, we replace it by the fractional differential equation ${_a^CD_t^{\alpha}}y(t)=f(t,y)$, with $\alpha\in(0,1)$. When we consider the limit $\alpha\to1^-$, we obtain the initial one. If we consider $\alpha\in(1,2)$ as well, and then take the limit $\alpha\to1^+$, we would get $y'(t)-y'(0)=f(t,y)$, and this is the reason why we will neglect this case firstly. After computing the solution of the fractional differential equation, which depends on $\alpha$, we consider the function with fractional order $\alpha$ on the interval $(0,2)$ in order to increase the accuracy of the method. Three distinct problems are studied in this paper.
Section \ref{sec:popul} deals with the exponential law of growth, and we replace it by the Mittag--Leffler function, which is a generalization of the exponential function.
In Section \ref{sec:BAL} we deal with an application related to Blood Alcohol Level  using integer-order derivatives.
In the final Section \ref{sec:video} we study a model of a tape counter readings at an instant time.
In each case, we compute the values of the parameters that better fit with the given data, and also the error in each of both approaches.

\section{World Population Growth}\label{sec:popul}
\subsection{Standard approach}

There exist several attempts to describe the World Population Growth \cite{Smith}. The simplest model is the following, known as the \textit{Malthusian law of population growth}, which is used to predict populations under ideal conditions. Let $N(t)$ be the number of individuals in a population at time $t$ , $B$ and $M$ the birth and mortality rates, respectively, so that the net growth rate is given by
\begin{equation}\label{popul-Eq-Dif}N'(t)= (B-M)N(t)=PN(t),\end{equation}
 where $P:=B-M$ is the production rate.  Here, we assume that $B$ and $M$ are constant, and thus $P$ is also constant.
The solution of this differential equation is the function
\begin{equation}\label{popul}N(t)=N_0 \mbox{e}^{P t}, \quad t\geq0,\end{equation}
where $N_0$ is the population at $t=0$. Because of the solution \eqref{popul}, this model is also known as the exponential growth model.

\subsection{Fractional approach}

Considered now that the World Population Growth model is ruled by the fractional differential equation
\begin{equation}\label{popul-Eq-frac}{_0^CD_t^{\alpha}}N(t)=P N(t),\quad t \geq 0, \, \alpha \in(0,1).\end{equation}
Observe that, taking the limit $\alpha\to1^-$, Eq \eqref{popul-Eq-frac} converts into Eq. \ref{popul-Eq-Dif}, but if we consider $\alpha\in(1,2)$ and take the limit $\alpha\to1^+$, we obtain $N'(t)-N'(0)=PN(t)$.

Using Theorem 7.2 in \cite{Diethelm}, the solution of this fractional differential equation is the function
\begin{equation}\label{popul-frac} N(t)=N_0 E_\alpha(P t^\alpha),\end{equation}
where $E_\alpha$ is the  Mittag--Leffler function
$$E_\alpha(t)=\sum_{k=0}^\infty\frac{t^k}{\Gamma(\alpha k+1)}, \quad \, t\in\mathbb R.$$

Consider now function \eqref{popul-frac}, with $\alpha\in(0,2)$. Then, as $\alpha\to1^\pm$, we recover the solution for the classical problem \eqref{popul}.
%when $\alpha\in(0,1)$, and if $\alpha\in(1,2)$, then the solution is given by
%\begin{equation}\label{popul-frac-2} N(t)=N_0 E_\alpha(P t^\alpha)+N'(0)\int_0^t E_\alpha(P s^\alpha)\, ds.\end{equation}

%Nota: para valor inicial de $N'(0)$, podes tomar 0.005

\subsection{Numerical experiments}

For our numerical treatment of the problem, we find several databases with the world population through the centuries. Here, we use the one provided by the United Nations \cite{UN}, from year 1910 until 2010, consisting in 11 values, where the initial value is $N_0=1750$. For the classical approach, the production rate is
$$P\approx 1.3501\times 10^{-2}$$
and the error from the data with respect to the analytic solution \eqref{popul} is given by
$$E_{classical}\approx 7.0795\times 10^{5}.$$

If we take into consideration the fractional model \eqref{popul-frac}, for $\alpha\in(0,2)$ we obtain that the best values are
$$\alpha= 1.393298754843208    \quad \mbox{and} \quad P\approx 3.4399\times 10^{-3}$$
with error
$$E_{fractional}\approx 2.0506\times 10^{5}.$$

The gain of the efficiency in this procedures is
$$\frac{7.0795\times 10^{5}-2.0506\times 10^{5}}{7.0795\times 10^{5}}\approx 0.71.$$

The graph of Figure \ref{fig:FigPopulacaoM} illustrates the World Population Growth model with the data, the classical model and the fractional model.
\\
\begin{figure}[!htbp]
\centering
\hspace*{-15pt}\includegraphics[width=1\linewidth]{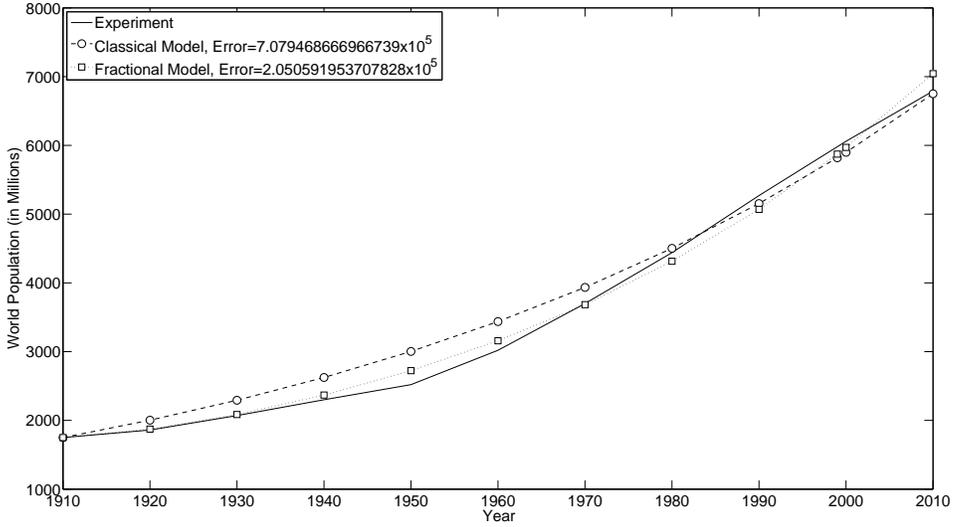}
\caption{World Population Growth model.}
\label{fig:FigPopulacaoM}
\end{figure}

\section{Blood Alcohol Level problem}\label{sec:BAL}
\subsection{Standard approach}
In \cite{Ludwin} we find a simple model to determine the level of Blood Alcohol, described by a system of two differential equations. Let $A$ represent the concentration of alcohol in the stomach and $B$ the concentration of alcohol in the blood. The problem is described by the following Cauchy system:
$$\left\{\begin{array}{l}
A'(t)=-k_1A(t)\\
B'(t)=k_1A(t)-k_2B(t)\\
A(0)=A_0\\
B(0)=0,
\end{array}\right.$$
where $A_0$ is the initial alcohol ingested by the subject and $k_1,k_2$ some real constants.
The solution of this system is given by the two functions
$$A(t)=A_0 \mbox{e}^{-k_1t}$$
and
$$B(t)=A_0\frac{k_1}{k_2-k_1}(\mbox{e}^{-k_1t}-\mbox{e}^{-k_2t}).$$
Also, in \cite{Ludwin} some experimental data is obtained in order to determine the arbitrary constants $k_1$ and $k_2$ (Table \ref{tab:BA1}). The time is in minutes and the Blood Alcohol Level (BAL) in mg/l.

\begin{center}
\begin{tabular}{|l|c|c|c|c|c|c|c|c|c|}
\hline
Time & 0 & 10& 20&30&45&80&90 &110& 170 \\
\hline
BAL&0& 150&200&160&130& 70&60&40&20\\
\hline
\end{tabular}\captionof{table}{Experimental data for Blood Alcohol Level.}\label{tab:BA1}
\end{center}

Using the data from the table, the values that minimize the Mean Absolute Error when the values from $B$ are fitted with the experimental data are:
$$A_0=245.8769, \quad k_1=0.109456, \quad k_2=0.017727,$$
and the error in this approximation is
%$$E_{classical}=775.1745,$$
$$E_{classical}=775.2225,$$
and for comparison, and get the following results in Table \ref{tab:BA2}

\begin{center}
\renewcommand{\tabcolsep}{3pt}
{\scriptsize
\begin{tabular}{|l|c|c|c|c|c|c|c|c|c|c|}
\hline
Time & 0 & 10& 20&30&45&80&90 &110& 170& Error \\
\hline
BAL&\multirow{2}{*}{0}& \multirow{2}{*}{150}&\multirow{2}{*}{200}&\multirow{2}{*}{160}&\multirow{2}{*}{130}&\multirow{2}{*}{70}&\multirow{2}{*}{60}&\multirow{2}{*}{40}
&\multirow{2}{*}{20}&\multirow{2}{*}{---}\\
(Experiment)  &&&&&&&&&&\\
\hline
BAL& \multirow{2}{*}{0.0000}&\multirow{2}{*}{147.5379}&\multirow{2}{*}{172.9499}&\multirow{2}{*}{161.3813}&\multirow{2}{*}{130.0021}& \multirow{2}{*}{71.0018}&\multirow{2}{*}{59.4910}&\multirow{2}{*}{41.7418}&\multirow{2}{*}{14.4100}&\multirow{2}{*}{775.2225}\\
(Classical)  &&&&&&&&&&\\
\hline
\end{tabular}
}\captionof{table}{Experimental data vs theoretical model.}\label{tab:BA2}
\end{center}

\subsection{Fractional approach}

Now we show that, if we consider the problem modeled by a system of fractional differential equations, we obtain a curve that better fit with the experimental results.

Let $\alpha, \beta \in (0,1)$ and consider the system of fractional differential equations
$$\left\{\begin{array}{l}
{_0^CD_t^{\alpha}}A(t)=-k_1A(t)\\
{_0^CD_t^{\beta}}B(t)=k_1A(t)-k_2B(t)\\
A(0)=A_0\\
B(0)=0.
\end{array}\right.$$

Using Theorem 7.2 in \cite{Diethelm}, the solution with respect to $A$ is
\begin{equation}\label{blood-frac}
A(t)=A_0E_\alpha(-k_1t^\alpha).\end{equation}
%and when
%$\alpha\in(1,2)$, then the solution is given by
%\begin{equation}\label{blood-frac-2} A(t)=A_0 E_\alpha(-k_1 t^\alpha)+A'(0)\int_0^t E_\alpha(-k_1 s^\alpha)\, ds\end{equation}
To determine $B$, it can be found as the solution of the fractional differential linear equation
$${_0^CD_t^{\beta}}B(t)=-k_2B(t)+k_1A_0E_\alpha(-k_1t^\alpha).$$
By Theorem 7.2 in \cite{Diethelm}, we obtain the solution with respect to $B$:
$$\begin{array}{ll}
B(t)&=\displaystyle\beta\int_0^t k_1A_0E_\alpha(-k_1(t-s)^\alpha)s^{\beta-1}E'_\beta(-k_2s^\beta)ds\\
&=\displaystyle\beta k_1A_0\int_0^t\sum_{m=0}^\infty\frac{(-k_1)^m(t-s)^{m\alpha}}{\Gamma(\alpha m+1)}s^{\beta-1} \sum_{n=0}^\infty\frac{(n+1)(-k_2)^ns^{n\beta}}{\Gamma(n\beta +\beta +1)}ds\\
&\displaystyle=\beta k_1A_0\sum_{m=0}^\infty\sum_{n=0}^\infty\frac{(-k_1)^m(-k_2)^n (n+1)}{\Gamma(\alpha m+1)\Gamma(n\beta+\beta+1)}\int_0^t(t-s)^{m\alpha}s^{n\beta+\beta-1}ds.
\end{array}$$
We remark that, as the expression inside the series is continuous and uniformly convergent, the previous calculations are valid.
Let us re-write function $B(\cdot)$.
Using the Beta function,
$$B(a,b)=\int_0^1t^{a-1}(1-t)^{b-1}dt, \quad a,b>0,$$
the following property
$$B(a,b)=\frac{\Gamma(a)\Gamma(b)}{\Gamma(a+b)},$$
and doing the change of variables $u=s/t$, we obtain
$$\int_0^t(t-s)^{m\alpha}s^{n\beta+\beta-1}ds=t^{m\alpha+n\beta+\beta}\int_0^1(1-u)^{m\alpha}u^{n\beta+\beta-1}du$$
$$=t^{m\alpha+n\beta+\beta}B(n\beta+\beta,m\alpha+1)=t^{m\alpha+n\beta+\beta}\frac{\Gamma(n\beta+\beta)\Gamma(m\alpha+1)}{\Gamma(n\beta+\beta+m\alpha+1)},$$
and so
\begin{equation}\label{alphabeta01}
B(t)=\displaystyle k_1A_0\sum_{m=0}^\infty\sum_{n=0}^\infty\frac{(-k_1)^m(-k_2)^n }{\Gamma(n\beta+\beta+m\alpha+1)}t^{m\alpha+n\beta+\beta}.
\end{equation}

\subsection{Numerical experiments}

For computational purposes we consider the upper bounds $n=m=45$ in Eq. \eqref{alphabeta01}, and we intend to determine the fractional orders $\alpha$ and $\beta$ on the interval $(0,2)$, the initial value $A_0$,  and the parameters $k_1$ and $k_2$ that best fit with the data. To that purpose, we obtain the values
$$A_0\approx373.0295, \quad k_1\approx 0.0643, \quad k_2\approx 0.0088,$$
with fractional orders
$$\alpha\approx1.1771 , \quad \beta\approx1.0052,$$
and the error in this approximation is
$$E_{fractional}\approx321.9677.$$
%usei n_max=m_max=45

By this method, the efficiency has increased
$$\frac{775.2225-321.9677}{775.2225}\approx0.58.$$

In Table \ref{tab:BA3} we summarize the results, comparing the standard approach, using first-order derivatives, with ours using fractional-order derivatives.

\begin{center}
\renewcommand{\tabcolsep}{3pt}
{\scriptsize
\begin{tabular}{|l|c|c|c|c|c|c|c|c|c|c|}
\hline
Time & 0 & 10& 20&30&45&80&90 &110& 170& Error \\
\hline
BAL&\multirow{2}{*}{0}& \multirow{2}{*}{150}&\multirow{2}{*}{200}&\multirow{2}{*}{160}&\multirow{2}{*}{130}&\multirow{2}{*}{70}&\multirow{2}{*}{60}&\multirow{2}{*}{40}&\multirow{2}{*}{20}&
\multirow{2}{*}{---}\\
(Experiment)  &&&&&&&&&&\\
\hline
BAL& \multirow{2}{*}{0.0000}&\multirow{2}{*}{147.5379}&\multirow{2}{*}{172.9499}&\multirow{2}{*}{161.3813}&\multirow{2}{*}{130.0021}& \multirow{2}{*}{71.0018}&\multirow{2}{*}{59.4910}&\multirow{2}{*}{41.7418}&\multirow{2}{*}{14.4100}&\multirow{2}{*}{775.1745}\\
(Classical)  &&&&&&&&&&\\
\hline
BAL&\multirow{2}{*}{0.0000}&\multirow{2}{*}{155.7458}&\multirow{2}{*}{187.1950} &\multirow{2}{*}{169.6587} &\multirow{2}{*}{128.4871} &\multirow{2}{*}{69.3254}&\multirow{2}{*}{59.3669}&\multirow{2}{*}{43.7670} & \multirow{2}{*}{16.2108}&\multirow{2}{*}{321.9677}\\
(Fractional)  &&&&&&&&&&\\
\hline
\end{tabular}
}\captionof{table}{Results for the Blood Alcohol Level problem.}\label{tab:BA3}
\end{center}
%\normalsize
%\vspace*{10pt}
%\normalsize
%\vspace*{10pt}
The graph of Figure \ref{fig:figureBAL} illustrates the Blood Alcohol Level with experimental data, classical model and fractional model.
\\
\begin{figure}[!htbp]
\centering
\hspace*{-15pt}\includegraphics[width=1\linewidth]{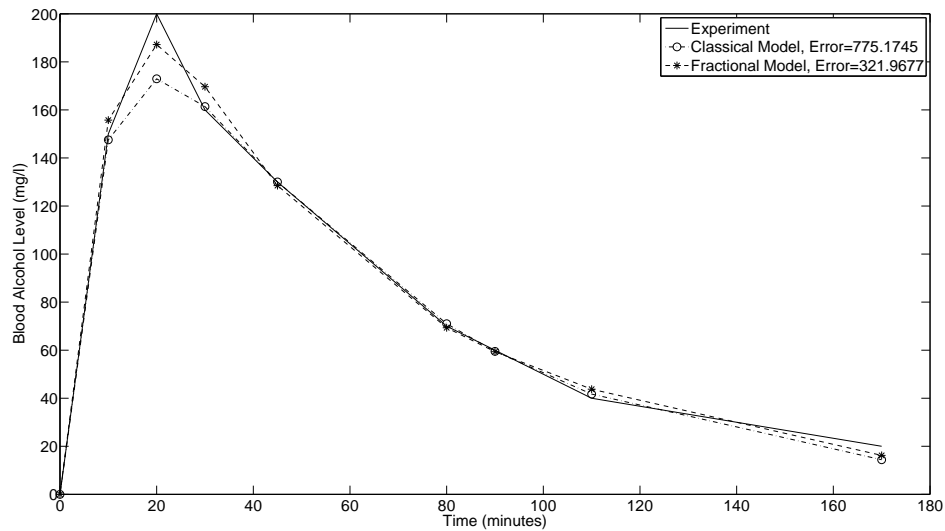}
\caption{The blood alcohol level problem.}
\label{fig:figureBAL}
\end{figure}

\section{Video Tape problem}\label{sec:video}
\subsection{Standard approach}
For the video tape problem, we intend to determine the tape counter readings $n(t)$ at an instant time $t>0$. For that, the problem is modeled in the following way. Let $c$ represents the thickness of the tape and $v$ the velocity of the tape, which is constant in time. Also, $R(t)$ indicates the radius of the wheel as is filled by the tape, and $A(t)$ measures the angle at moment $t$.

%(see Figure \label{video}).
%\begin{figure}
%\begin{center}
%  \includegraphics[width=6cm]{tape.eps}\\
%  \end{center}
%  \caption{The video tape model}\label{video}
%\end{figure}

The number of revolutions is proportional to the angle, that is, $n(t)=kA(t)$, for some positive constant $k$. In \cite{Galphin}, an ordinary differential equation is obtained to describe the behavior of the model:
\begin{equation}\label{video:ODE}A'(t)=\frac{v}{R(0)\sqrt{bt+1}}, \quad \mbox{where} \, b=\frac{cv}{\pi R^2(0)}.\end{equation}
Using the initial condition $A(0)=0$, the solution is obtained:
$$A(t)=\frac{2v}{bR(0)}(\sqrt{bt+1}-1).$$
In conclusion, the tape counter readings is given by the expression
\begin{equation}\label{tape-standard}n(t)=a(\sqrt{bt+1}-1), \quad \mbox{where} \, a=\frac{2kv}{bR(0)}.\end{equation}

\subsection{Fractional approach}

In this section we consider the video tape problem modeled by a fractional differential equation of order $\alpha \in(0,1)$:
$$\left\{\begin{array}{l}
{_0^CD_t^{\alpha}}A(t)=\displaystyle\frac{v}{R(0)\sqrt{bt+1}}, \quad \mbox{where} \, b=\frac{cv}{\pi R^2(0)}\\
A(0)=0\\
\end{array}\right.$$
Applying the fractional integral operator ${_0I_t^{\alpha}}$ to both sides of the fractional differential equation, and using Lemma \ref{2.22}, that reads as
$${_0I_t^{\alpha}}{_0^CD_t^{\alpha}}A(t)=A(t)-A(0), \quad \mbox{if} \, \alpha \in(0,1), $$
%and
%$${_0I_t^{\alpha}}{_0^CD_t^{\alpha}}A(t)=A(t)-A(0) -A'(0)t,\quad \mbox{if} \, \alpha \in(1,2), $$
we obtain that the solution is given by
\begin{equation}\label{tape-farc-1} n(t)=\frac{p}{\Gamma(\alpha)}\int_0^t\frac{(t-s)^{\alpha-1}}{\sqrt{bs+1}}ds, \quad \mbox{where} \, p=\frac{kv}{R(0)}, \, b=\frac{cv}{\pi R^2(0)}.\end{equation}
%in case $\alpha\in(0,1)$, and by
%\begin{equation}\label{tape-farc-2}n(t)=\frac{p}{\Gamma(\alpha)}\int_0^t\frac{(t-s)^{\alpha-1}}{\sqrt{bs+1}}ds+qt, \quad \mbox{where} \, p=\frac{kv}{R(0)}, \, %b=\frac{cv}{\pi R^2(0)}, \, q=kA'(0),\end{equation}
%when $\alpha\in(1,2)$.

\subsection{Numerical experiments}

Using the experimental data available at \url{http://people.uncw.edu/lugo/MCP/DIFF_EQ/deproj/deproj.htm}, that counts the number of revolutions every $5$ minutes, from $t=0$ until $t=240$, we first determine the constants $a$ and $b$ for which the model given in Eq. \eqref{tape-standard} best fit the data.
$$a=988.1532 \quad \mbox{and} \quad b= 0.0219,$$
and the error is given by
$$E_{classical}\approx16.7403.$$
When we consider the fractional approach, with fractional order $\alpha\in(0,2)$ for Function  \eqref{tape-farc-1}, we obtain the values
 %For $\alpha\in(0,1)$, the expression that we take in given by Eq. \eqref{tape-farc-1}, and for this case we have
$$\alpha=0.9917, \quad p=10.9666  \quad \mbox{and} \quad b=  0.0199$$
with error
$$E_{fractional}\approx16.2894.$$
For this last case, we have
$$\frac{16.7403-16.2894}{16.7403}\approx 0.03 .$$

The graph of Figure \ref{fig:FigVideotape} illustrates Video Tape model with experimental data, classical model and fractional model.
\\
\begin{figure}[!htbp]
\centering
\hspace*{-15pt}\includegraphics[width=1\linewidth]{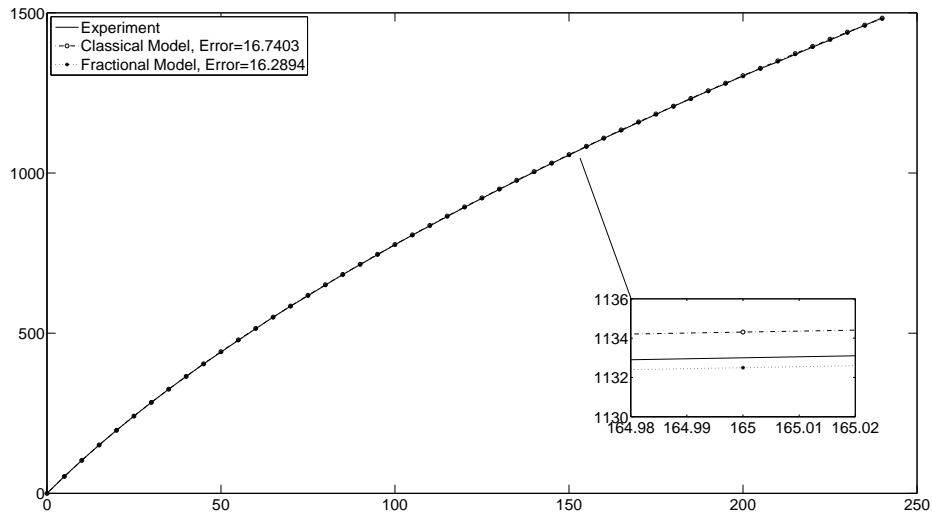}
\caption{The Video Tape problem.}
\label{fig:FigVideotape}
\end{figure}

\section{Conclusions}

Fractional differential equations can describe better certain real world phenomena. This is understandable since systems are not usually perfect, and can be perturbed (like friction, manipulation, external forces, etc) and because of it integer-order derivatives may not be adequate to understand the trajectories of the state variables. By considering fractional derivatives, we have an infinite choices of derivative orders that we can consider, and with it determine what is the fractional differential equation that better describes the dynamics of the model. We have seen this in our experimental data, where non-integer derivatives allow the solution curve to model more efficiently the problems.

\section*{Acknowledgements}

This work was supported by Portuguese funds through the CIDMA - Center for Research and Development in Mathematics and Applications,
and the Portuguese Foundation for Science and Technology (FCT-Funda\c{c}\~ao para a Ci\^encia e a Tecnologia), within project UID/MAT/04106/2013; third author by the ALGORITMI R\&D Center and project PEst-UID/CEC/00319/2013. The authors are very grateful to two referees
for many constructive comments and remarks.

\end{document}